\newtheorem{thm}{Theorem}[section]
\newtheorem{lem}[thm]{Lemma}
\newtheorem{prop}[thm]{Proposition}
\theoremstyle{definition}
\theoremstyle{remark}
\numberwithin{equation}{section}
\def\<{\langle}
\def\>{\rangle}
\begin{document}
\title[]{Complex symmetry of Composition operators induced by involutive  Ball automorphisms}
\author{S. Waleed Noor}%
\address{Abdus Salam School of Mathematical Sciences, New Muslim Town, Lahore, Pakistan}
\email{$\mathrm{waleed\_math@hotmail.com}$}

\begin{abstract}Suppose $\mathcal{H}$ is a weighted Hardy space of analytic functions on the unit ball $\mathbb{B}_n\subset\mathbb{C}^n$ such that the composition operator $C_\psi$ defined by $C_{\psi}f=f\circ\psi$ is bounded on $\mathcal{H}$ whenever $\psi$ is a linear fractional self-map of $\mathbb{B}_n$. If $\varphi$ is an involutive Moebius automorphism of $\mathbb{B}_n$, we find a conjugation operator $\mathcal{J}$ on $\mathcal{H}$ such that $C_{\varphi}=\mathcal{J} C^*_{\varphi}\mathcal{J}$. The case $n=1$ answers a question of Garcia and Hammond.
\end{abstract}

\subjclass[2010]{Primary 47B33, 47B32, 47B99; Secondary 47B35}
\keywords{Complex symmetric operator, conjugation, Moebius automorphism, composition operator, normal operator.}
\maketitle{}

\section{Introduction}
Let $\mathbb{B}_n$ denote the open unit ball of $\mathbb{C}^n$. A linear fractional self-map $\psi$
of $\mathbb{B}_n$ is a map of the form
\begin{equation}\label{eq:linear fractional map}
\psi(z)=\frac{Az+B}{\langle z,C\rangle+d}
\end{equation}
where $A$ is a linear operator on $\mathbb{C}^n$, with vectors $B,C\in\mathbb{B}_n$ and $d$ a complex number. Fix a vector $a\in\mathbb{B}_n$. Let $P_a$ be the orthogonal projection of $\mathbb{C}^n$ onto the complex line generated by $a$ and let $Q_a=I-P_a$. Setting $s_a=(1-|a|^2)^{1/2}$, we denote by $\varphi_a:\mathbb{B}_n\to\mathbb{B}_n$ the linear fractional map
\begin{equation}\label{eq:Moebius automorphism}
\varphi_a(z)=\frac{a-P_az-s_aQ_az}{1-\langle z,a\rangle}
\end{equation}
which by Section 2.2.1 of Rudin \cite{Rudin} is the involutive Moebius automorphism of $\mathbb{B}_n$ that interchanges $0$ and $a$.

Suppose $\mathcal{H}$ is a weighted Hardy space on $\mathbb{B}_n$ as introduced in \cite{Cowantext}; that is a Hilbert space of analytic functions on $\mathbb{B}_n$ such that the monomials $z^\alpha=z_1^{\alpha_1}\ldots z_n^{\alpha_n}$ for $\alpha=(\alpha_1,\ldots,\alpha_n)\in\mathbb{N}^n$ form an orthogonal basis for $\mathcal{H}$. Additionally, the monomials $z^\alpha$ satisfy $\frac{||z^\alpha||}{||z^\alpha||_{H^2}}=\frac{||z^\gamma||}{||z^\gamma||_{H^2}}$ whenever $|\alpha|=|\gamma|$ for $\gamma\in\mathbb{N}^n$, where $|\gamma|=\gamma_1+\ldots+\gamma_n$ and $||.||$ and $||.||_{H^2}$ are the norms in $\mathcal{H}$ and the classical Hardy space $H^2(\mathbb{B}_n)$ respectively. The sequence $\beta_m=\frac{||z^\alpha||}{||z^\alpha||_{H^2}}$ for $m=|\alpha|$ determines $\mathcal{H}$: the inner product on $\mathcal{H}$ is given by
\begin{equation}\label{eq:inner product}
\langle f,g \rangle=\sum_{m=0}^\infty\langle f_m,g_m\rangle_{H^2}\beta_m^2
\end{equation}
where $f_m,g_m$ are the homogeneous polynomials of degree $m$ in the homogeneous expansions $f=\sum f_m$ and $g=\sum g_m$ on $\mathbb{B}_n$ respectively. If $\psi$ is an analytic self-map of $\mathbb{B}_n$, then the composition operator $C_\psi$ is defined by $C_\psi f=f\circ\psi$ for $f\in\mathcal{H}$. When $\mathcal{H}$ is the classical Hardy space or a weighted Bergman space on $\mathcal{D}=\mathbb{B}_1$, then the boundedness of $C_\psi$ on $\mathcal{H}$ follows from the Littlewood Subordination Theorem. But for $n\geq2$, there exist unbounded composition operators even on the classical spaces \cite{Cowantext}. For our purpose, it is sufficient to assume that $\mathcal{H}$ is a weighted Hardy space such that $C_\psi$ is bounded on $\mathcal{H}$ whenever $\psi$ is a linear fractional self-map of $\mathbb{B}_n$. From here onwards $\mathcal{H}$ shall always denote such a space. Many of the classical Hilbert spaces of analytic functions on $\mathbb{B}_n$ satisfy this requirement: For any real $s>0$, let $H_s$ be the Hilbert space of analytic functions on $\mathbb{B}_n$ with reproducing kernel function
\[
K^s_w(z)=\frac{1}{(1-\langle w,z\rangle)^s}
\]
for $w,z\in\mathbb{B}_n$. These spaces are called generalized weighted Bergman spaces by Zhao and Zhu \cite{Zhu}. Recently Le \cite{Le} showed that $C_\psi$ is bounded on $H_s$ for $s>0$ whenever $\psi$ is a linear fractional self-map of $\mathbb{B}_n$. The space $H_n$ is the classical Hardy space $H^2(\mathbb{B}_n)$ and $H_s$ is the weighted Bergman space $A^2_{s-n-1}(\mathbb{B}_n)$ for $s>n$. When $n\geq 2$, then $H_1$ is the Drury-Arveson space from multi-variable operator theory \cite{Arveson}.

 A bounded operator $T$ on $\mathcal{H}$ is called \emph{complex symmetric} if $T$ has a self-transpose matrix representation with respect to some orthonormal basis of $\mathcal{H}$. This is equivalent to the existence of a \emph{conjugation} (i.e., a conjugate-linear, isometric involution) $C$ such that $T=CT^*C$; and $T$ is called $C$-symmetric. If $z=(z_1,z_2,\ldots,z_n)\in\mathbb{C}^n$ then termwise complex conjugation $\bar{z}=(\overline{z_1},\overline{z_2},\ldots,\overline{z_n})$ is a conjugation operator on $\mathbb{C}^n$. The general study of complex symmetric operators on Hilbert spaces was initiated by Garcia, Putinar and Wogen \cite{Garc2}\cite{Garc3}\cite{Wogen1}\cite{Wogen2}.

 The complex symmetry of weighted composition operators ($M_h C_\psi$; where $M_h$ is multiplication by an analytic function $h$ on $\mathbb{B}_n$) was recently studied by Garcia and Hammond \cite{Garc1} for $n=1$. A complete description of complex symmetric weighted composition operators remains an open problem (even in the case $h\equiv1$). Normal operators are the most obvious examples of complex symmetric operators and the characterization of normal composition operators on $H_s$ for $s>0$ follows from Theorem 8.2 \cite{Cowantext}:

\begin{prop}\label{prop: normal composition}Let $\psi$ be an analytic self-map of $\mathbb{B}_n$. Then $C_\psi$ is normal on $H_s$ for $s>0$ if and only if $\psi(z)=Az$ for some normal linear operator $A$ on $\mathbb{C}^n$ with $||A||\leq1$.
\end{prop}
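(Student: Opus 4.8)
The plan is to argue everything through the reproducing kernels $K^s_w$ of $H_s$, using two standard facts: $C_\psi^* K^s_w = K^s_{\psi(w)}$ for every $w\in\mathbb{B}_n$ (valid since $C_\psi$ is bounded on $H_s$), and the density in $H_s$ of the linear span of $\{K^s_w:w\in\mathbb{B}_n\}$. I will also use that a normal operator $T$ satisfies $\|Tf\|=\|T^*f\|$ for all $f$, and that homogeneous polynomials of distinct degrees are mutually orthogonal in $H_s$.

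For the implication $(\Leftarrow)$, suppose $\psi(z)=Az$ with $A$ normal and $\|A\|\le1$. Then $\psi$ is a linear fractional self-map of $\mathbb{B}_n$ (take $B=C=0$, $d=1$ in \eqref{eq:linear fractional map}), so $C_\psi$ is bounded on $H_s$ by the theorem of Le quoted above. A one-line computation gives $C_\psi K^s_w = K^s_w\circ\psi = K^s_{A^*w}$, while $C_\psi^* K^s_w = K^s_{\psi(w)} = K^s_{Aw}$; hence $C_\psi C_\psi^* K^s_w = K^s_{A^*Aw}$ and $C_\psi^* C_\psi K^s_w = K^s_{AA^*w}$. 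Normality of $A$ makes these agree for every $w$, and density of the kernels promotes this to $C_\psi C_\psi^*=C_\psi^*C_\psi$. (Equivalently, writing $A=UDU^*$ with $U$ unitary and $D$ diagonal, $C_\psi$ is unitarily equivalent via the unitary $C_{z\mapsto Uz}$ to $C_{z\mapsto Dz}$, which is the diagonal, hence normal, operator $z^\alpha\mapsto\lambda^\alpha z^\alpha$ on the monomial basis.)

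For the implication $(\Rightarrow)$, assume $C_\psi$ is normal. \emph{Step 1: $\psi(0)=0$.} Apply $\|C_\psi^* f\|=\|C_\psi f\|$ to $f\equiv1=K^s_0$: the right side is $\|1\|=1$ and the left side is $\|K^s_{\psi(0)}\|=(1-|\psi(0)|^2)^{-s/2}$, which forces $\psi(0)=0$ since $s>0$. \emph{Step 2: $\psi(z)=Az$ with $A=\psi'(0)$.} Because $\psi(0)=0$, the homogeneous expansion of $\psi$ begins in degree one with leading term $Az$; hence, if $g\in H_s$ has no homogeneous component of degree one, then neither does $C_\psi g$, its lowest-order contributions — coming from the homogeneous parts of $g$ composed with $z\mapsto Az$ — having degree $\ge2$. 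By orthogonality of distinct degrees, the $n$-dimensional subspace $\mathcal{L}$ of homogeneous linear polynomials is therefore invariant under $C_\psi^*$, and a short computation identifies $C_\psi^*|_\mathcal{L}$ with the matrix $\overline{A}$ in the basis $z_1,\dots,z_n$, so that $\|C_\psi^* z_j\|^2=c\sum_i|A_{ij}|^2$ with $c=\|z_1\|^2$. On the other hand $C_\psi z_j=\psi_j$ splits orthogonally as its degree-one part $\sum_k A_{jk}z_k$ plus a higher-order part $h_j$, whence $\|C_\psi z_j\|^2=c\sum_k|A_{jk}|^2+\|h_j\|^2$. Normality gives $\|C_\psi^* z_j\|=\|C_\psi z_j\|$ for each $j$; summing over $j$ makes the $A$-terms cancel by $\|A\|_{HS}=\|A^*\|_{HS}$, leaving $\sum_j\|h_j\|^2=0$, so every $h_j$ vanishes and $\psi(z)=Az$. \emph{Step 3: $A$ is normal.} With $\psi$ linear, $C_\psi$ — and therefore $C_\psi^*$ — is block-diagonal for the grading of $H_s$ by homogeneous degree, so $\mathcal{L}$ is a reducing subspace and $C_\psi|_\mathcal{L}$ is normal; since $z_1,\dots,z_n$ is an orthogonal basis of $\mathcal{L}$ with all vectors of equal norm, this says exactly that the matrix $A^{T}$ of $C_\psi|_\mathcal{L}$ in that basis is a normal matrix, i.e.\ that $A$ is normal. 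Finally $\|A\|\le1$ because $\psi$ maps $\mathbb{B}_n$ into itself.

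I expect the only genuine obstacle to be Step 2, namely extracting from normality that $\psi$ is actually \emph{linear}: one must notice that the degree grading keeps $\mathcal{L}$ invariant under $C_\psi^*$, and then that evaluating the normality identity $\|C_\psi^* f\|=\|C_\psi f\|$ at $f=z_j$ and summing over $j$ precisely annihilates the higher-order terms of $\psi$. The remaining steps are the standard kernel identity, a single norm computation, and finite-dimensional linear algebra.
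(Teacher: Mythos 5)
Your argument is correct, but note that the paper itself gives no proof of this proposition: it simply cites Theorem 8.2 of Cowen--MacCluer \cite{Cowantext}, so what you have written is a self-contained version of the standard argument underlying that citation rather than a rival to anything in the text. All the ingredients you use check out: the sufficiency direction via $C_\psi^*K^s_w=K^s_{\psi(w)}$ and $C_\psi K^s_w=K^s_{A^*w}$ together with density of the kernels (or, even more economically, your parenthetical diagonalization $A=UDU^*$, which also yields boundedness of $C_\psi$ without invoking Le's theorem); the necessity direction via $\|C_\psi^*K^s_0\|=\|C_\psi K^s_0\|$ forcing $\psi(0)=0$; the observation that $\psi(0)=0$ makes $\mathcal{L}^\perp=\{g:g_1=0\}$ invariant under $C_\psi$, hence $\mathcal{L}$ invariant under $C_\psi^*$ with matrix $\overline{A}$; the Hilbert--Schmidt summation over $j$ that annihilates the higher-order parts $h_j$ (this is the one genuinely clever step, and it is sound because homogeneous components of distinct degrees are orthogonal in $H_s$ and all $\|z_j\|$ coincide); and finally the reduction to the degree-one block, where normality of $C_\psi|_{\mathcal{L}}$ is equivalent to normality of $A^T$, hence of $A$, with $\|A\|\le1$ because $\psi$ maps $\mathbb{B}_n$ into itself. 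In short: correct and complete modulo standard reproducing-kernel facts, and essentially the classical proof that the paper outsources to \cite{Cowantext}.
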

For a linear operator $V$ on $\mathbb{C}^n$ with $||V||\leq1$, we denote by $C_V$ the composition operator $C_{\phi_V}$ where $\phi_V(z)=Vz$ for $z\in\mathbb{B}_n$. Since $\phi_V$ is a linear fractional map of the form \eqref{eq:linear fractional map}, the operator $C_V$ is bounded on $\mathcal{H}$ and $C_V^*=C_{V^*}$. Define the conjugate-linear operator $J:\mathcal{H}\to\mathcal{H}$ by $(Jf)(z)=\overline{f(\bar{z})}$ for $f\in\mathcal{H}$ and $z\in\mathbb{B}_n$. That $J$ is a conjugation on $\mathcal{H}$ follows from \eqref{eq:inner product} and the fact that $J$ is isometric on $H^2(\mathbb{B}_n)$. The following result is useful for constructing examples of complex symmetric composition operators.

\begin{prop}\label{complex symmetric composition}If $V$ is a complex symmetric linear operator on $\mathbb{C}^n$ with $||V||\leq~1$, then $C_V$ is complex symmetric on $\mathcal{H}$.
\end{prop}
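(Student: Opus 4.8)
The plan is to build the required conjugation on $\mathcal{H}$ directly from a conjugation of $\mathbb{C}^n$ that witnesses the complex symmetry of $V$. Since $V$ is complex symmetric, fix a conjugation $\gamma$ on $\mathbb{C}^n$ with $V=\gamma V^{*}\gamma$, and write $\gamma=M\gamma_{0}$, where $\gamma_{0}z=\bar z$ is termwise complex conjugation and $M$ is a unitary matrix (every conjugate-linear isometry of $\mathbb{C}^n$ has this form); the relation $\gamma^{2}=I$ is then precisely $M\overline{M}=I$, so $\overline{M}=M^{-1}$ and $M$ is symmetric. Because $\gamma$ is an isometry, $z\in\mathbb{B}_n$ forces $\gamma z\in\mathbb{B}_n$, so
\[
(\mathcal{J}f)(z):=\overline{f(\gamma z)}\qquad (f\in\mathcal{H},\ z\in\mathbb{B}_n)
\]
defines an analytic function on $\mathbb{B}_n$, since $z\mapsto M\bar z$ is anti-analytic and hence $\overline{f(M\bar z)}$ is analytic. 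Unravelling the definitions, $\mathcal{J}=JC_{M}$, where $J$ is the conjugation introduced above.

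The first step is to check that $\mathcal{J}$ is a conjugation on $\mathcal{H}$. As $\phi_{M}(z)=Mz$ is linear fractional, $C_{M}$ is bounded, and by the stated fact $C_{M}^{*}=C_{M^{*}}=C_{M^{-1}}$; since $C_{M}C_{M^{-1}}=C_{M^{-1}M}=I$ and $C_{M^{-1}}C_{M}=I$, the operator $C_{M}$ is unitary on $\mathcal{H}$. Because $J$ is an isometric conjugate-linear involution, $\mathcal{J}=JC_{M}$ is isometric and conjugate-linear. For the involution property I would record the two elementary identities $JC_{W}J=C_{\overline{W}}$ for $W$ linear on $\mathbb{C}^n$ (immediate from $(JC_{W}Jf)(z)=f(\overline{W\bar z})=f(\overline{W}z)$) and $C_{A}C_{B}=C_{BA}$ for linear $A,B$. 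Then $\mathcal{J}^{2}=JC_{M}JC_{M}=C_{\overline{M}}C_{M}=C_{M\overline{M}}=C_{I}=I$ using $M\overline{M}=I$, so $\mathcal{J}$ is a conjugation.

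It remains to verify $C_{V}=\mathcal{J}C_{V}^{*}\mathcal{J}$. Using $C_{V}^{*}=C_{V^{*}}$ and the two identities above,
\begin{align*}
\mathcal{J}C_{V^{*}}\mathcal{J}&=JC_{M}C_{V^{*}}JC_{M}=JC_{V^{*}M}JC_{M}=C_{\overline{V^{*}M}}\,C_{M}\\
&=C_{V^{t}\overline{M}}\,C_{M}=C_{MV^{t}\overline{M}}=C_{MV^{t}M^{-1}},
\end{align*}
where $\overline{V^{*}M}=\overline{V^{*}}\,\overline{M}=V^{t}\overline{M}$ because $\overline{V^{*}}=\overline{\bar V^{\,t}}=V^{t}$. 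On the other hand, unpacking the hypothesis, $V=\gamma V^{*}\gamma=M\gamma_{0}V^{*}M\gamma_{0}=M\,\overline{V^{*}M}=MV^{t}\overline{M}=MV^{t}M^{-1}$. Comparing the two, $\mathcal{J}C_{V^{*}}\mathcal{J}=C_{V}$, i.e.\ $C_{V}$ is $\mathcal{J}$-symmetric, which proves the proposition.

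The only genuinely delicate point is the involution property $\mathcal{J}^{2}=I$: this is exactly where one uses that the unitary $M$ implementing the complex symmetry of $V$ can be chosen symmetric — equivalently, that $\gamma$ is a true conjugation ($\gamma^{2}=I$) and not merely an anti-unitary of $\mathbb{C}^n$. Everything else reduces to bookkeeping with the composition identities $C_{A}C_{B}=C_{BA}$ and $JC_{W}J=C_{\overline{W}}$, together with the fact that $C_{M}$ is unitary on $\mathcal{H}$.
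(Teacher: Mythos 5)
Your argument is correct. It differs from the paper's in how the complex symmetry of $V$ is exploited: the paper uses the matrix formulation, choosing a unitary $U_V$ so that $W=U_V^*VU_V$ has a symmetric matrix, checking directly that $C_W$ is $J$-symmetric, and then transporting $J$ through the unitary equivalence $C_V=C_{U_V^*}C_WC_{U_V}$ to get $\mathcal{J}_V=C_{U_V^*}JC_{U_V}$; you instead start from a conjugation $\gamma$ on $\mathbb{C}^n$ with $V=\gamma V^*\gamma$, factor $\gamma=M\gamma_0$ with $M$ a symmetric unitary, and verify everything by symbol calculus with the identities $C_AC_B=C_{BA}$ and $JC_WJ=C_{\overline W}$. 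The two constructions produce the same object in substance: with $\gamma=U_V\gamma_0U_V^*$ one has $M=U_VU_V^{t}$ and your $\mathcal{J}=JC_M$ coincides with the paper's $\mathcal{J}_V$. What your route buys is that it bypasses the reduction to a symmetric matrix representation and isolates exactly where the involution property of $\gamma$ (i.e.\ $M\overline M=I$, so $M=M^{t}$) is needed, namely to make $\mathcal{J}^2=I$; what the paper's route buys is a shorter verification, since the special case of a symmetric matrix is a one-line computation and the general case is pure unitary conjugation. All the auxiliary facts you use (boundedness of $C_M$ and $C_{V^*}$, $C_V^*=C_{V^*}$, $J$ a conjugation on $\mathcal{H}$) are available in the paper, and your algebraic steps, including $\overline{V^*}=V^{t}$ and $V=MV^{t}M^{-1}$, check out.
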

\begin{proof}First suppose that $V$ has a symmetric matrix representation with respect to the standard orthonormal basis of $\mathbb{C}^n$. Then $V$ is complex symmetric with respect to the usual complex conjugation $\bar{z}$ for $z\in\mathbb{C}^n$; that is $\overline{V^*\bar{z}}=Vz$. Hence $C_V$
is $J$-symmetric because for any $f\in\mathcal{H}$ and $z\in\mathbb{C}^n$, we have
\[
(JC_{V^*}Jf)(z)=f(\overline{V^*\bar{z}})=f(Vz)=(C_Vf)(z).
\]
In general, let $U_V$ be a unitary operator on $\mathbb{C}^n$ such that $W=U_V^*VU_V$ has a symmetric matrix with respect to the standard basis of $\mathbb{C}^n$. Then $C_V$ is unitarily equivalent to $C_W$; that is $C_V=C_{U_V^*}C_WC_{U_V}$ where $C_{U_V}$ is unitary on $\mathcal{H}$. Hence if we define the conjugation $\mathcal{J}_V$ on $\mathcal{H}$ by $\mathcal{J}_V=C_{U_V^*}JC_{U_V}$, it follows that $C_V$ is $\mathcal{J}_V$-symmetric.
\end{proof}

It is known that all $2\times2$ complex matrices are complex symmetric \cite{Garc2}. For $n=2$ it follows that $C_V$ is complex symmetric on $\mathcal{H}$ for any linear operator $V$ on $\mathbb{C}^2$. As a consequence of Proposition \ref{prop: normal composition} and Proposition \ref{complex symmetric composition}, if $V$ is complex symmetric on $\mathbb{C}^n$ but not normal, then $C_V$ has the same properties on $\mathcal{H}$.

\section{The Complex symmetry of $C_{\varphi_a}$}
Not every complex symmetric composition operator is of the form $C_V$ for some linear operator $V$ on $\mathbb{C}^n$. The composition operator $C_{\varphi_a}$ is bounded on $\mathcal{H}$ and complex symmetric for all $a\in\mathbb{B}_n$. This is because $C_{\varphi_a}\circ C_{\varphi_a}=I$, and Theorem 2 \cite{Wogen2} states that operators algebraic of degree $2$ are complex symmetric. Our main goal is to construct a conjugation $\mathcal{J}_a$ such that $C_{\varphi_a}$ is $\mathcal{J}_a$-symmetric on $\mathcal{H}$. The particular case $n=1$ resolves a problem of Garcia and Hammond \cite{Garc1}: \emph{If $\varphi$ is an involutive disk automorphism, find an explicit conjugation $\mathcal{J}:\mathcal{H}\to\mathcal{H}$ such that $C_\varphi=\mathcal{J}C_\varphi^*\mathcal{J}$.}

For any $a\in\mathbb{B}_n$, denote by $W_a:\mathcal{H}\to\mathcal{H}$ the unitary part in the polar decomposition of $C_{\varphi_a}$ on $\mathcal{H}$. We need two lemmas for constructing the conjugation $\mathcal{J}_a$ from $W_a$. The first one is a general result about the unitary part in the polar decomposition of a linear involution on a Hilbert space.
\begin{lem}\label{lem:W^2=I}If $T$ is an invertible operator on a Hilbert space with $T^2=I$ and with polar decomposition $T=U|T|$, then $U^2=I$. In particular, $U$ is self-adjoint.
\end{lem}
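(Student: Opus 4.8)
The plan is to exploit the uniqueness of the polar decomposition for invertible operators. Since $T$ is invertible, $|T|=(T^*T)^{1/2}$ is positive \emph{and invertible}, so that $U=T|T|^{-1}$ is genuinely unitary; moreover $T=U|T|$ is the only way to write $T=VP$ with $V$ unitary and $P$ positive and invertible (from $T=VP$ one gets $T^*T=P^2$, forcing $P=|T|$ and hence $V=U$).

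First I would compute the polar decomposition of $T^{-1}$ starting from that of $T$:
\[
T^{-1}=|T|^{-1}U^{-1}=U^*\bigl(U|T|^{-1}U^*\bigr),
\]
where $U^*$ is unitary and $U|T|^{-1}U^*$ is positive and invertible, being a unitary conjugate of the positive invertible operator $|T|^{-1}$. Thus the unitary part of $T^{-1}$ is $U^*$. Now I would invoke the hypothesis $T^2=I$, i.e. $T=T^{-1}$: comparing $T=U|T|$ with the decomposition just found for $T^{-1}$ and using uniqueness of the polar decomposition, the unitary parts must coincide, so $U=U^*$. In particular $U$ is self-adjoint, and then $U^2=UU^*=I$ because $U$ is unitary.

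I do not anticipate a genuine obstacle here: the only point requiring attention is to check that the two factorizations are polar decompositions in the strict sense — that $|T|$ and $U|T|^{-1}U^*$ are positive and invertible and that $U$, $U^*$ are unitary — so that the uniqueness theorem applies; all of this follows at once from the invertibility of $T$.
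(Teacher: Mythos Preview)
Your argument is correct. The key point---that $T^{-1}=U^*(U|T|^{-1}U^*)$ is a genuine polar decomposition because $U|T|^{-1}U^*$ is positive and invertible---is verified just as you say, and uniqueness of the polar decomposition for invertible operators then forces $U=U^*$, hence $U^2=UU^*=I$.

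Your route, however, differs from the paper's. The paper does not invoke uniqueness of the polar decomposition; instead it argues computationally. From $T^2=I$ (and hence $(T^*)^2=I$) it shows that $T^*T$ and $TT^*$ commute, so $|T|$ and $|T^*|$ commute, and then $(|T^*|\,|T|)^2=(TT^*)(T^*T)=I$ gives $|T|^{-1}=|T^*|$ by uniqueness of the positive square root. Using the intertwining relation $T^*|T|=|T^*|T^*$ and the formula $U=T^*|T|$ (equivalent to $U=T|T|^{-1}$ since $(T^*)^{-1}=T^*$), it computes $U^2=(|T^*|T^*)(T^*|T|)=|T^*|\,|T|=I$ directly, and only then deduces $U=U^*$. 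So the two proofs reach the conclusions in opposite order: the paper obtains $U^2=I$ first and self-adjointness as a corollary, whereas you obtain $U=U^*$ first and $U^2=I$ as a corollary. Your approach is cleaner and more conceptual; the paper's is more explicit and, as a by-product, yields the concrete identity $|T|^{-1}=|T^*|$, which your argument recovers only implicitly (it is equivalent to your $|T|=U|T|^{-1}U^*$ once $U=U^*$ is known, since $U|T|^{-1}U^*=U|T|^{-1}U^{-1}=|T^*|$).
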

\begin{proof} We first prove that $|T|^{-1}=|T^*|$.  It is known that if two positive operators $A$ and $B$ commute then $\sqrt{A}$ and $\sqrt{B}$ also commute. Hence
\[
(T^*T)(TT^*)=(TT^*)(T^*T)
\]
implies that $|T||T^*|=|T^*||T|$. So $(|T^*||T|)^2=|T^*|^2|T|^2=(TT^*)(T^*T)=I$. Since the product of two commuting positive operators is again positive, the uniqueness of the square root of a positive operator implies $|T||T^*|=|T^*||T|=I$. We also note that the observation $T^*(T^*T)=(TT^*)T^*$ implies $T^*|T|=|T^*|T^*$. Now since $U=T^*|T|$ it follows that
\[
U^2=(|T^*|T^*)(T^*|T|)=|T^*||T|=I.
\]
Since $U$ is unitary, $U^2=I$ implies $U^*=U$.
\end{proof}

By Lemma \ref{lem:W^2=I}, the unitary operator $W_a$ is self-adjoint and satisfies $W_a^2=I$ on $\mathcal{H}$ for any $a\in\mathbb{B}_n$.

\begin{lem}\label{lem:W_aproperties}If $a\in\mathbb{B}_n\cap\mathbb{R}^n$ then $JW_a=W_aJ$.
\end{lem}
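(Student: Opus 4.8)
The plan is to first show that the hypothesis $a\in\mathbb{B}_n\cap\mathbb{R}^n$ forces $C_{\varphi_a}$ itself to commute with $J$, and then to propagate this symmetry through the polar decomposition using the uniqueness of the positive square root. For the first step, observe that when $a$ is a real vector the matrices $P_a$ and $Q_a$ have real entries, $s_a=(1-|a|^2)^{1/2}$ is real, and $\langle\bar z,a\rangle=\overline{\langle z,a\rangle}$; substituting into \eqref{eq:Moebius automorphism} gives $\varphi_a(\bar z)=\overline{\varphi_a(z)}$ for every $z\in\mathbb{B}_n$. Combining this identity with the definition $(Jf)(z)=\overline{f(\bar z)}$, a direct computation yields $(JC_{\varphi_a}Jf)(z)=\overline{(Jf)(\overline{\varphi_a(z)})}=f(\varphi_a(z))=(C_{\varphi_a}f)(z)$, so that $JC_{\varphi_a}J=C_{\varphi_a}$, equivalently $JC_{\varphi_a}=C_{\varphi_a}J$.

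Next I would record the elementary fact that for any conjugation $J$ and any bounded operator $T$ on $\mathcal{H}$, the operator $JTJ$ is bounded and linear with $(JTJ)^*=JT^*J$. Applying this with $T=C_{\varphi_a}$ and using $JC_{\varphi_a}J=C_{\varphi_a}$ gives $JC_{\varphi_a}^*J=C_{\varphi_a}^*$. Consequently $J$ commutes with the positive operator $P:=C_{\varphi_a}^*C_{\varphi_a}$, since $JPJ=(JC_{\varphi_a}^*J)(JC_{\varphi_a}J)=C_{\varphi_a}^*C_{\varphi_a}=P$.

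Now I would pass to square roots. The operator $JP^{1/2}J$ is self-adjoint by the adjoint formula just quoted, it is positive because $\langle JP^{1/2}Jx,x\rangle=\overline{\langle P^{1/2}Jx,Jx\rangle}\geq0$, and $(JP^{1/2}J)^2=JPJ=P$; by uniqueness of the positive square root, $JP^{1/2}J=P^{1/2}$, i.e.\ $J|C_{\varphi_a}|=|C_{\varphi_a}|J$. Since $C_{\varphi_a}$ is an involution it is invertible, hence so is $|C_{\varphi_a}|$, and $W_a=C_{\varphi_a}|C_{\varphi_a}|^{-1}$. As $J$ commutes with each factor (the second because $J|C_{\varphi_a}|^{-1}J=(J|C_{\varphi_a}|J)^{-1}=|C_{\varphi_a}|^{-1}$), we conclude $JW_aJ=W_a$, which is the assertion.

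The only genuinely content-bearing step is the first one — verifying that reality of $a$ makes $\varphi_a$ intertwine termwise complex conjugation — and there the point to watch is that $P_a$ and $Q_a$, although defined as operators on $\mathbb{C}^n$, have real matrix representations precisely because $a\in\mathbb{R}^n$. Everything after that is a routine manipulation of the polar decomposition resting on the uniqueness of the positive square root, so I expect no real obstacle beyond bookkeeping.
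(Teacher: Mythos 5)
Your proof is correct and follows essentially the same route as the paper: establish $JC_{\varphi_a}J=C_{\varphi_a}$ from the reality of $a$, deduce that $J$ commutes with $C_{\varphi_a}^*$ and hence with $|C_{\varphi_a}|$ via uniqueness of the positive square root, and then transfer the commutation to the unitary factor of the polar decomposition. The only cosmetic differences are that you verify $\varphi_a(\bar z)=\overline{\varphi_a(z)}$ pointwise rather than checking the component functions on monomials, and you use the generic formula $W_a=C_{\varphi_a}|C_{\varphi_a}|^{-1}$ in place of the paper's $W_a=C_{\varphi_a}^*|C_{\varphi_a}|$ (which relies on $C_{\varphi_a}^2=I$).
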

\begin{proof} The polar decomposition $C_{\varphi_a}=W_a|C_{\varphi_a}|$ implies that $W_a=C_{\varphi_a}^*|C_{\varphi_a}|$. We first show that $JC_{\varphi_a}=C_{\varphi_a}J$ for $a\in\mathbb{B}_n\cap\mathbb{R}^n$. For $a=(a_1,\ldots,a_n)\in\mathbb{B}_n\cap\mathbb{R}^n$ and $z=(z_1,\ldots,z_n)\in\mathbb{B}_n$, we have $J\langle z,a\rangle=\overline{\langle \bar{z},a\rangle}={\langle z,a\rangle}$. If $j=1,\ldots,n$, by \eqref{eq:Moebius automorphism} and $P_az=\frac{{\langle z,a\rangle}}{{\langle a,a\rangle}}a$ and $Q_az=z-\frac{{\langle z,a\rangle}}{{\langle a,a\rangle}}a$, the $j$-th component function of $\varphi_a$ denoted by $[\varphi_a]_j$ is
\begin{align*}
[\varphi_a]_j(z)&=\big{[}\frac{a-P_az-s_aQ_az}{1-\langle z,a\rangle}\big{]}_j=\big{[}\frac{(1-\frac{{\langle z,a\rangle}}{{\langle a,a\rangle}}+s_a\frac{{\langle z,a\rangle}}{{\langle a,a\rangle}})a-s_az}{1-\langle z,a\rangle} \big{]}_j\\
&=\frac{(1-\frac{{\langle z,a\rangle}}{{\langle a,a\rangle}}+s_a\frac{{\langle z,a\rangle}}{{\langle a,a\rangle}})a_j-s_az_j}{1-\langle z,a\rangle}.
\end{align*}
By $J\langle z,a\rangle={\langle z,a\rangle}$ it follows that $J[\varphi_a]_j=[\varphi_a]_j$ and hence for any $j=1,\ldots,n$ we have
\[
JC_{\varphi_a}Jz_j=JC_{\varphi_a}z_j=J[\varphi_a]_j(z)=[\varphi_a]_j(z)=C_{\varphi_a}z_j.
\]
Since $C_{\varphi_a}$ and $J$ are both multiplicative operators on $\mathcal{H}$, we have $JC_{\varphi_a}Jz^\alpha=C_{\varphi_a}z^\alpha$ for any multi-index $\alpha\in\mathbb{N}^n$. Therefore $JC_{\varphi_a}J=C_{\varphi_a}$ for $a\in\mathbb{B}_n\cap\mathbb{R}^n$. From this it follows that $JC_{\varphi_a}^*=C_{\varphi_a}^*J$ and hence $J|C_{\varphi_a}|=|C_{\varphi_a}|J$. Since $W_a=C_{\varphi_a}^*|C_{\varphi_a}|$, we get $JW_a=W_aJ$ for $a\in\mathbb{B}_n\cap\mathbb{R}^n$.
\end{proof}

 So for $a\in\mathbb{B}_n\cap\mathbb{R}^n$, lemmas \ref{lem:W^2=I} and \ref{lem:W_aproperties} imply that the conjugate-linear isometry defined by $\mathcal{J}_a=JW_a$ is an involution on $\mathcal{H}$; that is $\mathcal{J}_a^2=(W_aJ)(JW_a)=W^2_a=I$. So $\mathcal{J}_a$ is a conjugation on $\mathcal{H}$ and furthermore:
\begin{thm}\label{cor:S alpha}If $a\in\mathbb{B}_n\cap\mathbb{R}^n$ then $C_{\varphi_a}$ is $\mathcal{J}_a$-symmetric on $\mathcal{H}$.
\end{thm}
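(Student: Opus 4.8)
The plan is to verify directly the identity $\mathcal{J}_a C_{\varphi_a}^*\mathcal{J}_a=C_{\varphi_a}$, which is exactly the assertion that $C_{\varphi_a}$ is $\mathcal{J}_a$-symmetric; that $\mathcal{J}_a=JW_a$ is a conjugation has already been recorded in the paragraph preceding the statement. The first step is to pin down the shape of $C_{\varphi_a}^*$. Since $C_{\varphi_a}^2=I$, the operator $C_{\varphi_a}$ is invertible, so Lemma \ref{lem:W^2=I} applies to its polar decomposition $C_{\varphi_a}=W_a|C_{\varphi_a}|$ and yields $W_a^*=W_a$ and $W_a^2=I$. Taking adjoints in the polar decomposition then gives $C_{\varphi_a}^*=|C_{\varphi_a}|W_a$.

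The second step is the computation, which uses only the commutation relations isolated in Lemma \ref{lem:W_aproperties} and its proof (and it is here that the hypothesis $a\in\mathbb{B}_n\cap\mathbb{R}^n$ enters): namely $JW_a=W_aJ$ and $J|C_{\varphi_a}|=|C_{\varphi_a}|J$, together with the involution identities $J^2=W_a^2=I$. It is cleanest to check the equivalent identity $\mathcal{J}_a C_{\varphi_a}=C_{\varphi_a}^*\mathcal{J}_a$ (equivalent because $\mathcal{J}_a^2=I$). On the one hand,
\[
\mathcal{J}_a C_{\varphi_a}=JW_a\cdot W_a|C_{\varphi_a}|=J|C_{\varphi_a}|=|C_{\varphi_a}|J,
\]
using $W_a^2=I$ and then $J|C_{\varphi_a}|=|C_{\varphi_a}|J$; on the other hand,
\[
C_{\varphi_a}^*\mathcal{J}_a=|C_{\varphi_a}|W_a\cdot JW_a=|C_{\varphi_a}|W_a\cdot W_aJ=|C_{\varphi_a}|J,
\]
using $C_{\varphi_a}^*=|C_{\varphi_a}|W_a$, then $JW_a=W_aJ$, then $W_a^2=I$. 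Since the two right-hand sides agree, $\mathcal{J}_a C_{\varphi_a}=C_{\varphi_a}^*\mathcal{J}_a$; applying $\mathcal{J}_a$ on the left and using $\mathcal{J}_a^2=I$ produces $C_{\varphi_a}=\mathcal{J}_a C_{\varphi_a}^*\mathcal{J}_a$, as required.

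I do not anticipate a genuine obstacle here: all the substantive work has been placed in Lemmas \ref{lem:W^2=I} and \ref{lem:W_aproperties} --- the self-adjointness of the unitary part $W_a$, and the fact that for real $a$ the conjugation $J$ commutes with $C_{\varphi_a}$ and hence with $|C_{\varphi_a}|$ and with $W_a=C_{\varphi_a}^*|C_{\varphi_a}|$ --- so what remains is formal manipulation of a pair of commuting self-adjoint involutions. The one point deserving care is bookkeeping with the conjugate-linear maps $J$ and $\mathcal{J}_a$: the displayed relations are honest operator identities and may be composed associatively, but one must not commute complex scalars past $J$, and one should note explicitly that passing from $\mathcal{J}_a C_{\varphi_a}=C_{\varphi_a}^*\mathcal{J}_a$ to $C_{\varphi_a}=\mathcal{J}_a C_{\varphi_a}^*\mathcal{J}_a$ uses nothing beyond $\mathcal{J}_a^2=I$.
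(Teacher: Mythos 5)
Your proof is correct and takes essentially the same route as the paper: both verify the identity $\mathcal{J}_aC_{\varphi_a}=C_{\varphi_a}^*\mathcal{J}_a$ by formal manipulation of the polar decomposition using Lemma \ref{lem:W^2=I} and the commutation relations from Lemma \ref{lem:W_aproperties}. The only cosmetic difference is that you collapse both sides to $|C_{\varphi_a}|J$ via $W_a^2=I$ and $J|C_{\varphi_a}|=|C_{\varphi_a}|J$, while the paper chains through $W_a=C_{\varphi_a}^*|C_{\varphi_a}|$ and $JC_{\varphi_a}=C_{\varphi_a}J$.
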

\begin{proof}From Lemma \ref{lem:W_aproperties} and the self-adjointness of $W_a$ we get
\begin{align*}
C_{\varphi_a}^*\mathcal{J}_a&=C_{\varphi_a}^*JW_a=C_{\varphi_a}^*W_aJ=C_{\varphi_a}^*W_a^*J=C_{\varphi_a}^*|C_{\varphi_a}|C_{\varphi_a}J\\
&=W_aC_{\varphi_a}J=W_aJC_{\varphi_a}=\mathcal{J}_aC_{\varphi_a}
\end{align*}
and hence that $C_{\varphi_a}$ is $\mathcal{J}_a$-symmetric when $a\in\mathbb{B}_n\cap\mathbb{R}^n$.
\end{proof}

For the general case $a\in\mathbb{B}_n$, we will show that $C_{\varphi_a}$ is unitarily equivalent to $C_{\varphi_{\tilde{a}}}$ for some $\tilde{a}\in\mathbb{B}_n\cap\mathbb{R}^n$. For $\Theta=(\theta_1,\ldots,\theta_n)\in\mathbb{R}^n$, denote by $U_\Theta:\mathcal{H}\to\mathcal{H}$ the bounded composition operator defined by
$(U_\Theta f)(z)=f(e^{i\theta_1}z_1,\ldots,e^{i\theta_n}z_n)$ for $f\in\mathcal{H}$. Then $U_\Theta$ is unitary on $H^2(\mathbb{B}_n)$ and hence also on $\mathcal{H}$ by \eqref{eq:inner product}. If the vector $-\Theta=(-\theta_1,\ldots,-\theta_n)\in\mathbb{R}^n$, then $U^*_\Theta=U_{-\Theta}$. Now fix $a\in\mathbb{B}^n$ and choose $\Theta\in\mathbb{R}^n$ such that $\tilde{a}=(e^{i\theta_1}a_1,\ldots,e^{i\theta_n}a_n)\in\mathbb{R}^n$. Since $\langle U^*_\Theta z,a\rangle=\sum_{j=1}^n e^{-i\theta_j}z_j\bar{a_j}=\langle z,\tilde{a}\rangle$ and $s_a=s_{\tilde{a}}$, for $j=1,\ldots,n$ we have
\begin{align*}
U_\Theta^*C_{\varphi_a}U_\Theta z_j&=U_\Theta^*C_{\varphi_a}e^{i\theta_j}z_j=U_\Theta^*e^{i\theta_j}[\varphi_a]_j=U_\Theta^*e^{i\theta_j}\frac{(1-\frac{{\langle z,a\rangle}}{{\langle a,a\rangle}}+s_a\frac{{\langle z,a\rangle}}{{\langle a,a\rangle}})a_j-s_az_j}{1-\langle z,a\rangle}\\
&=\frac{(1-\frac{{\langle U^*_\Theta z,a\rangle}}{{\langle a,a\rangle}}+s_a\frac{{\langle U^*_\Theta z,a\rangle}}{{\langle a,a\rangle}})e^{i\theta_j}a_j-s_ae^{i\theta_j}U^*_\Theta z_j}{1-\langle U^*_\Theta z,a\rangle}\\
&=\frac{(1-\frac{{\langle z,\tilde{a}\rangle}}{{\langle \tilde{a},\tilde{a}\rangle}}+s_{\tilde{a}}\frac{{\langle z,\tilde{a}\rangle}}{{\langle \tilde{a},\tilde{a}\rangle}})\tilde{a}_j-s_{\tilde{a}}z_j}{1-\langle z,\tilde{a}\rangle}
=C_{\varphi_{\tilde{a}}}z_j
\end{align*}
which implies that $U_\Theta^*C_{\varphi_a}U_\Theta=C_{\varphi_{\tilde{a}}}$ on $\mathcal{H}$. To define the conjugation $\mathcal{J}_a$ when $a\in\mathbb{B}_n$, we let $\mathcal{J}_a=U_\Theta\mathcal{J}_{\tilde{a}}U^*_\Theta$. Hence we have proved
\begin{thm}If $a\in\mathbb{B}_n$ then $C_{\varphi_a}$ is $\mathcal{J}_a$-symmetric on $\mathcal{H}$, where $\mathcal{J}_a=U_\Theta\mathcal{J}_{\tilde{a}}U^*_\Theta$ and $\Theta=(\theta_1,\ldots,\theta_n)\in\mathbb{R}^n$ is chosen such that $\tilde{a}=(e^{i\theta_1}a_1,\ldots,e^{i\theta_n}a_n)\in\mathbb{R}^n$.
\end{thm}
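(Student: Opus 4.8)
The plan is to reduce the general case $a\in\mathbb{B}_n$ to the real case already handled by Theorem \ref{cor:S alpha}, using a diagonal unitary change of variables that rotates each coordinate of $a$ onto the real axis. First I would fix $a\in\mathbb{B}_n$ and observe that one can always choose phases $\theta_1,\ldots,\theta_n\in\mathbb{R}$ so that $e^{i\theta_j}a_j\ge 0$ for each $j$; writing $\Theta=(\theta_1,\ldots,\theta_n)$ and $\tilde a=(e^{i\theta_1}a_1,\ldots,e^{i\theta_n}a_n)$, this vector lies in $\mathbb{B}_n\cap\mathbb{R}^n$ since $|\tilde a|=|a|<1$.

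Next I would verify that the diagonal composition operator $U_\Theta$, defined by $(U_\Theta f)(z)=f(e^{i\theta_1}z_1,\ldots,e^{i\theta_n}z_n)$, is unitary on $\mathcal{H}$: it permutes the monomial basis up to unimodular scalars, hence is isometric on $H^2(\mathbb{B}_n)$, and by the formula \eqref{eq:inner product} for the inner product on $\mathcal{H}$ it is isometric (and invertible, with inverse $U_{-\Theta}$) on $\mathcal{H}$ as well. The central computation is then to show $U_\Theta^* C_{\varphi_a} U_\Theta = C_{\varphi_{\tilde a}}$. Since all operators in sight are multiplicative, it suffices to check this on the coordinate functions $z_j$; using $\langle U_\Theta^* z,a\rangle=\sum_j e^{-i\theta_j}z_j\overline{a_j}=\langle z,\tilde a\rangle$, the equality $s_a=s_{\tilde a}$ (as $|a|=|\tilde a|$), and $\langle a,a\rangle=\langle\tilde a,\tilde a\rangle$, one substitutes into the explicit component formula for $[\varphi_a]_j$ from the proof of Lemma \ref{lem:W_aproperties} and simplifies to obtain exactly $[\varphi_{\tilde a}]_j(z)$. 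This establishes $C_{\varphi_a}=U_\Theta C_{\varphi_{\tilde a}} U_\Theta^*$.

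Finally I would set $\mathcal{J}_a = U_\Theta \mathcal{J}_{\tilde a} U_\Theta^*$, where $\mathcal{J}_{\tilde a}=JW_{\tilde a}$ is the conjugation from Theorem \ref{cor:S alpha} (legitimate because $\tilde a\in\mathbb{B}_n\cap\mathbb{R}^n$). Since $U_\Theta$ is unitary and $\mathcal{J}_{\tilde a}$ is a conjugation, $\mathcal{J}_a$ is again a conjugation (conjugate-linear, isometric, and $\mathcal{J}_a^2=U_\Theta\mathcal{J}_{\tilde a}^2 U_\Theta^*=I$). Conjugating the identity $C_{\varphi_{\tilde a}}=\mathcal{J}_{\tilde a}C_{\varphi_{\tilde a}}^*\mathcal{J}_{\tilde a}$ by $U_\Theta$ and using $C_{\varphi_a}^*=U_\Theta C_{\varphi_{\tilde a}}^* U_\Theta^*$ yields $C_{\varphi_a}=\mathcal{J}_a C_{\varphi_a}^*\mathcal{J}_a$, i.e.\ $C_{\varphi_a}$ is $\mathcal{J}_a$-symmetric.

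The only genuinely delicate point is the coordinate-function computation $U_\Theta^* C_{\varphi_a} U_\Theta z_j = C_{\varphi_{\tilde a}} z_j$: one must track carefully how the phases $e^{i\theta_j}$ propagate through both the numerator (the $a_j$ term picks up $e^{i\theta_j}$ to become $\tilde a_j$, while $z_j$ is first sent to $e^{i\theta_j}z_j$ by the inner $U_\Theta$ and then the outer $U_\Theta^*$ replaces $z_j$ by $e^{-i\theta_j}z_j$) and the denominator (which becomes $1-\langle U_\Theta^* z,a\rangle=1-\langle z,\tilde a\rangle$). Everything else — unitarity of $U_\Theta$, the conjugation properties of $\mathcal{J}_a$, and the final algebraic manipulation — is routine once that identity is in hand.
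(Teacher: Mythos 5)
Your proposal is correct and follows essentially the same route as the paper: reduce to $\tilde a\in\mathbb{B}_n\cap\mathbb{R}^n$ via the diagonal unitary $U_\Theta$, verify $U_\Theta^*C_{\varphi_a}U_\Theta=C_{\varphi_{\tilde a}}$ on the coordinate functions using $\langle U_\Theta^*z,a\rangle=\langle z,\tilde a\rangle$ and $s_a=s_{\tilde a}$, and then transport the conjugation by setting $\mathcal{J}_a=U_\Theta\mathcal{J}_{\tilde a}U_\Theta^*$. The only difference is cosmetic: you spell out the final conjugation step and the verification that $\mathcal{J}_a$ is a conjugation, which the paper leaves implicit.
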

\bibliographystyle{amsplain}

\end{document}